\documentclass[12pt]{article}

\usepackage{amsmath, amsthm, amssymb}
\usepackage{geometry}
\newtheorem{theorem}{Theorem}
\newtheorem{lemma}[theorem]{Lemma}
\newtheorem{proposition}[theorem]{Proposition}
\newtheorem{corollary}[theorem]{Corollary}

\newtheorem{remark}[theorem]{Remark}

\title{Equilibrium Biphasicity and Non-Binary Pathwise Confinement
in Stochastic Ising Models}

\author{
J.-G.~Attali\thanks{Contact: jean-gabriel.attali@devinci.fr} \\
\small De Vinci Higher Education, De Vinci Research Center, Paris, France
}

\date{}

\begin{document} 
\maketitle

\begin{abstract}
For the low-temperature two-dimensional Ising model, the two pure Gibbs phases
exhaust the extremal equilibrium states, but not the pathwise absorbing
structure of the Glauber dynamics. Let
\[
P^\pm=\{\sigma:M_n(\sigma)\to \pm m_\beta\},\qquad
R=\Omega\setminus(P^+\cup P^-).
\]
We show that \(R\) is null under both pure phases but contains a dense
pathwise confined subset. More precisely, we construct a dense family of initial
configurations whose trajectories are confined to the centered sector
\[
C_0=\{\sigma:M_n(\sigma)\to0\}\subset R.
\]
Nevertheless, the corresponding Cesaro averages converge to \(\frac12(\mu^++\mu^-)\).
Thus the pathwise absorbing geometry is richer than the Gibbs-phase
classification, without creating a third Gibbs phase.
\end{abstract}

\section{Introduction}

At zero external field and inverse temperature $\beta>\beta_c$, the
two-dimensional Ising model has exactly two extremal Gibbs phases, the plus
and minus phases $\mu^+$ and $\mu^-$. Every Gibbs state is a convex
combination of them \cite{Aizenman80,Higuchi81,Georgii11,FriedliVelenik17}.

We consider the associated infinite-volume continuous-time heat-bath
Glauber dynamics on $\Omega=\{-1,+1\}^{\mathbb Z^2}$, constructed through
the Harris graphical representation \cite{Harris72,Liggett85}. Let us recall
\[
        M_n(\sigma)=\frac1{|\Lambda_n|}
        \sum_{i\in\Lambda_n}\sigma_i,
        \qquad
        \Lambda_n=[-n,n]^2\cap\mathbb Z^2,
\]
and define
\[
        P^\pm=\{\sigma:M_n(\sigma)\to\pm m_\beta\},
        \qquad
        R=\Omega\setminus(P^+\cup P^-),
\]
where $m_\beta>0$ is the spontaneous magnetization. By ergodicity,
$R$ has zero mass under both pure phases.

We show that the complement \(R\) of the two pure magnetization sectors,
although null under both pure phases, contains a dense family of configurations
whose trajectories remain in \(R\) for all times. Thus the equilibrium
classification by Gibbs phases does not exhaust the pathwise confinement
structure of the dynamics.

The construction uses periodic antisymmetric configurations: periodic configurations
which become their spin-flip after a lattice translation. They form a dense subset
of \(\Omega\). Their antisymmetry is preserved in law by the zero-field dynamics,
which forces fixed-time centering; a vanishing-mesh argument then upgrades this
fixed-time statement to pathwise confinement in
\[
        C_0=\{\sigma:M_n(\sigma)\to0\}\subset R.
\]
Thus, denoting by $S_{C_0}$ and $S_R$ the maximal pathwise confinement
sets in $C_0$ and $R$, we obtain
\[
        \mathcal A_{\mathrm{anti}}
        \subset S_{C_0}\subset S_R.
\]
In particular, \(S_{C_0}\) and \(S_R\) are nonempty, dense and semigroup-absorbing in the completed
measurable structure.

This doesn't produce a third Gibbs phase. For every antisymmetric periodic
initial condition considered here, the Cesaro averages
\[
        \frac1T\int_0^T\delta_\eta P_t\,dt
\]
converge to $\frac12(\mu^++\mu^-)$. Indeed, every subsequential Cesaro
limit is invariant, hence Gibbs by Holley--Stroock \cite{HolleyStroock77};
the classification in two dimensions \cite{Aizenman80,Higuchi81}, together
with the antisymmetry, forces the coefficient to be $1/2$.

The result separates two notions: classification of invariant Gibbs states and pathwise confinement of individual trajectories.

\section{Pure sectors and pathwise confinement}

We first recall the macroscopic sectors used throughout the paper and record
the pathwise confinement sets associated with them. The point is to separate three facts: the pure Gibbs phases give full mass
to the pure sectors, the non-pure region is nevertheless topologically large,
and the relevant dynamical objects are pathwise confinement sets.

\subsection{Pure, centered, and non-pure sectors}

Let
\[
        M_n(\sigma)
        =
        \frac{1}{|\Lambda_n|}
        \sum_{i\in\Lambda_n}\sigma_i,
        \qquad
        \Lambda_n=[-n,n]^2\cap\mathbb Z^2.
\]
At low temperature, let \(m_\beta>0\) denote the spontaneous magnetization.
We define the two pure magnetization sectors by
\[
        P^+
        =
        \{\sigma\in\Omega: M_n(\sigma)\to m_\beta\},
        \qquad
        P^-
        =
        \{\sigma\in\Omega: M_n(\sigma)\to -m_\beta\}.
\]
The non-pure region is
\[
        R
        =
        \Omega\setminus(P^+\cup P^-).
\]
By translation invariance and ergodicity of the pure phases,
\[
        \mu^+(P^+)=1,
        \qquad
        \mu^-(P^-)=1.
\]
In particular,
\[
        \mu^+(R)=0,
        \qquad
        \mu^-(R)=0.
\]

We shall also use the centered exact sector
\[
        C_0
        =
        \{\sigma\in\Omega: M_n(\sigma)\to 0\}.
\]
Since \(m_\beta>0\), one has
\[
        C_0\subset R.
\]
Thus any trajectory which remains in \(C_0\) remains, in particular, outside
the two pure magnetization sectors.

\subsection{Topological comparison with the pure sectors}

We record the Baire-category size of the pure magnetization sectors. This
shows that the non-pure region is topologically large, even though it is null
under both pure phases.

\begin{proposition}[Topology of the pure sectors]
\label{prop:pure-sectors-topology}
At low temperature, the pure sectors
\[
        P^+
        \quad\text{and}\quad
        P^-
\]
are meagre in \(\Omega\). Consequently, the non-pure region
\[
        R=\Omega\setminus(P^+\cup P^-)
\]
is comeagre.
\end{proposition}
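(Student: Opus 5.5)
We prove that each of \(P^+\) and \(P^-\) is contained in a countable union of closed nowhere dense subsets of the product topology on \(\Omega\). Since the topology is generated by cylinder sets, a set is nowhere dense once every cylinder contains a smaller cylinder disjoint from it. The key observation is that convergence of \(M_n\) to \(m_\beta\) is a tail property: changing finitely many spins alters \(M_n\) by \(O(1/|\Lambda_n|)\). No cylinder condition can therefore force it. In fact a finite window can always be extended so that some \(M_n\) is far from \(m_\beta\).

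Concretely, fix \(\varepsilon=m_\beta/2\) and write
\[
        P^+\subset\bigcup_{N\ge1}F_N,
        \qquad
        F_N=\{\sigma\in\Omega:\ |M_n(\sigma)-m_\beta|\le\varepsilon\ \text{for all } n\ge N\}.
\]
Each condition \(|M_n(\sigma)-m_\beta|\le\varepsilon\) depends only on the finitely many spins in \(\Lambda_n\), so it defines a clopen set. Hence \(F_N\) is an intersection of clopen sets and is closed.

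To show \(F_N\) has empty interior, take any nonempty cylinder \(U\), determined by the spins in some \(\Lambda_k\). Choose \(n\ge\max(N,k)\) large enough that \(|\Lambda_k|/|\Lambda_n|<\varepsilon/4\). Define \(\sigma\) to agree with the cylinder on \(\Lambda_k\) and to equal \(-1\) on \(\Lambda_n\setminus\Lambda_k\). Then
\[
        M_n(\sigma)\le -1+2|\Lambda_k|/|\Lambda_n|<0<m_\beta-\varepsilon,
\]
so the cylinder fixing \(\sigma\) on \(\Lambda_n\) is a nonempty subcylinder of \(U\) disjoint from \(F_N\). Thus \(F_N\) is closed with empty interior, hence nowhere dense, and \(P^+\) is meagre.

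The same argument applies to \(P^-\), now filling the annulus \(\Lambda_n\setminus\Lambda_k\) with \(+1\) spins. Alternatively, \(P^-\) is the image of \(P^+\) under the global spin flip, which is a homeomorphism of \(\Omega\). A finite union of meagre sets is meagre, and by the Baire category theorem for the compact metrizable space \(\Omega\) the complement \(R\) is comeagre (in particular dense).

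There is no genuine obstacle here. The only point that needs care is checking that \(F_N\) is closed, which is where the finite-dependence of each \(M_n\) is used, and noting that the argument uses only \(m_\beta>0\), so that \(\varepsilon>0\) and the target value is strictly separated from \(-1\). No input from the Gibbs measures or the dynamics is needed. This is the expected contrast with the measure-theoretic statement \(\mu^\pm(P^\pm)=1\).
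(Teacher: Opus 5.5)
Your proof is correct, and it takes a genuinely different route from the paper.

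\textbf{The paper's argument.} It argues softly. $P^\pm$ are Borel and invariant under finite perturbations, so by the Bhaskara Rao--Pol topological zero-one law each is either meagre or comeagre. Being disjoint, they cannot both be comeagre. The homeomorphism $\Theta$ exchanges them, so they have the same category. Hence both are meagre.

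\textbf{Your argument.} You exhibit an explicit cover $P^+\subset\bigcup_N F_N$ by closed sets. You then kill their interiors by filling a large annulus $\Lambda_n\setminus\Lambda_k$ with $-1$ spins.

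\textbf{What each route buys.} The paper's route needs no estimate. It does rely on a cited zero-one law, on the Baire property of $P^\pm$, and above all on the $\Theta$-symmetry pairing $P^+$ with $P^-$.

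Your route is self-contained and symmetry-free. The same annulus-filling, with $-1$ or $+1$ spins as appropriate, shows that $C_r$ is meagre for every $r\in[-1,1]$. This includes the $\Theta$-invariant sector $C_0$, where the disjointness-plus-symmetry trick is silent: the zero-one law alone leaves both alternatives open. So your method also shows that the dense confined set $S_{C_0}$ constructed later lies inside a meagre set.

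\textbf{Two cosmetic points.} First, your opening remark that finite changes alter $M_n$ by $O(1/|\Lambda_n|)$ is really the heuristic behind the paper's invariance argument, not what your proof uses. Second, comeagreness of $R$ follows directly from the definition; Baire's theorem is needed only for the added conclusion that $R$ is dense.
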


\begin{proof}
The sets \(P^+\) and \(P^-\) are Borel and invariant under finite
perturbations, since changing finitely many spins does not affect the limit
of the block magnetizations. By the topological zero-one law for
finite-perturbation invariant Borel sets \cite{BhaskaraRaoPol78}, each of
\(P^+\) and \(P^-\) is either meagre or comeagre.

They are disjoint, hence they cannot both be comeagre. Moreover, the global
spin-flip \(\Theta\) is a homeomorphism of \(\Omega\) and exchanges \(P^+\)
and \(P^-\). Therefore \(P^+\) and \(P^-\) have the same Baire category.

It follows that neither can be comeagre. Hence both \(P^+\) and \(P^-\) are
meagre. Therefore \(P^+\cup P^-\) is meagre, and so
\[
        R=\Omega\setminus(P^+\cup P^-)
\]
is comeagre.
\end{proof}

\subsection{Maximal pathwise confinement sets}

Absorbing and invariant regions are classical objects in Markov process theory
\cite{Rosenblatt71}. In the present infinite-volume setting we use the following
pathwise confinement sets.
Let \(C\subset\Omega\) be Borel. Define the path event
\[
        H_C
        =
        \{\omega\in D([0,\infty),\Omega):
          \omega(t)\in C \ \text{for all } t\ge 0\}.
\]
The maximal pathwise confinement set in \(C\) is
\[
        S_C
        =
        C\cap
        \{\xi\in\Omega:\mathbb P_\xi(H_C)=1\}.
\]
Thus \(S_C\) is the set of initial configurations in \(C\) whose trajectories
remain in \(C\) for all future times, almost surely.

We shall use this definition for \(C=C_0\) and \(C=R\). In particular,
\[
        S_{C_0}
        =
        C_0\cap
        \{\xi:\mathbb P_\xi(\sigma_t\in C_0 \ \forall t\ge0)=1\},
\]
and
\[
        S_R
        =
        R\cap
        \{\xi:\mathbb P_\xi(\sigma_t\in R \ \forall t\ge0)=1\}.
\]
Since \(C_0\subset R\), one has immediately
\[
        S_{C_0}\subset S_R.
\]

The following elementary fact records the measurability and absorption
properties of these maximal confinement sets.

\begin{proposition}[Maximal pathwise confinement sets]
\label{prop:maximal-pathwise-confinement}
Let \(C\subset\Omega\) be Borel. Then \(S_C\) is universally measurable and
semigroup-absorbing in the universally completed sense. Namely, for every
\(\xi\in S_C\) and every \(u\ge0\),
\[
        P_u(\xi,S_C)=1.
\]
\end{proposition}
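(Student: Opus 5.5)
The proof splits into a measurability step and an absorption step. The absorption step uses the Markov property at time \(u\) together with the shift invariance of the path event \(H_C\).

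\emph{Measurability.} Since \(C\) is Borel, \(H_C\) is a coanalytic subset of the Skorokhod space \(D([0,\infty),\Omega)\). Its complement
\[
D\setminus H_C=\{\omega:\exists t\ge0,\ \omega(t)\notin C\}
\]
is the projection onto \(D\) of the Borel set \(\{(t,\omega):\omega(t)\notin C\}\subset[0,\infty)\times D\). This set is Borel because the evaluation map \((t,\omega)\mapsto\omega(t)\) is jointly Borel on \([0,\infty)\times D\). Hence \(H_C\) is universally measurable.

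By the Harris construction, \(\xi\mapsto\mathbb P_\xi\) is a Borel probability kernel from \(\Omega\) to \(D\). A standard fact about kernels is that \(\xi\mapsto\mathbb P_\xi(A)\) is universally measurable whenever \(A\) is universally measurable. To prove it, fix a probability measure \(m\) on \(\Omega\). Form the mixture \(\int\mathbb P_\xi\,m(d\xi)\) and sandwich \(A\) between Borel sets \(B_1\subset A\subset B_2\) with equal mixture measure. The functions \(\xi\mapsto\mathbb P_\xi(B_1)\) and \(\xi\mapsto\mathbb P_\xi(B_2)\) are Borel and agree \(m\)-a.e. Consequently \(\{\xi:\mathbb P_\xi(H_C)=1\}\) is universally measurable, and so is \(S_C\), being its intersection with the Borel set \(C\).

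\emph{Absorption.} Let \(\theta_u\) denote the path shift, \((\theta_u\omega)(t)=\omega(t+u)\). Two elementary inclusions hold: \(H_C\subset\theta_u^{-1}H_C\), and \(H_C\subset\{\omega(u)\in C\}\). Fix \(\xi\in S_C\), so \(\mathbb P_\xi(H_C)=1\). By the simple Markov property at time \(u\), which extends to universally measurable functionals by the same sandwich argument,
\[
1=\mathbb P_\xi(\theta_u^{-1}H_C)=\mathbb E_\xi\bigl[\mathbb P_{\sigma_u}(H_C)\bigr]=\int P_u(\xi,d\eta)\,\mathbb P_\eta(H_C).
\]
The integrand is bounded by \(1\) and its integral equals \(1\), so \(\mathbb P_\eta(H_C)=1\) for \(P_u(\xi,\cdot)\)-a.e.\ \(\eta\). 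Also \(P_u(\xi,C)\ge\mathbb P_\xi(H_C)=1\). Intersecting these two full-measure sets gives \(P_u(\xi,S_C)=1\), where \(P_u(\xi,\cdot)\) is extended to its completion.

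The main obstacle is the measure-theoretic bookkeeping. The set \(H_C\) is generally only coanalytic, not Borel, so two things must be checked carefully: that the kernel \(\mathbb P_\xi\) and the Markov property extend to the universal completion, and that the joint measurability of evaluation on \(D\) holds. I would handle both through the completion sandwich argument and standard facts about Skorokhod space. The fact that \(\Omega\) is Polish (it is compact metrizable) guarantees that \(D\) is Polish, which is what the projection theorem requires.
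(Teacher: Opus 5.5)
Your proposal is correct and follows essentially the same route as the paper. Measurability comes from the projection argument that makes $H_C$ coanalytic, together with the Borel kernel $\xi\mapsto\mathbb P_\xi$; absorption comes from the Markov property at time $u$, combined with $H_C\subset\theta_u^{-1}H_C$ and $P_u(\xi,C)=1$. Your integrated identity $\mathbb P_\xi(\theta_u^{-1}H_C)=\int P_u(\xi,d\eta)\,\mathbb P_\eta(H_C)$ and the explicit sandwich arguments only spell out the completion bookkeeping that the paper's conditional-expectation version leaves implicit.
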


\begin{proof}
The space \(\Omega=\{-1,+1\}^{\mathbb Z^2}\) is compact Polish, and therefore
the Skorokhod path space \(D([0,\infty),\Omega)\) is Polish. Since the
evaluation map
\[
        (t,\omega)\longmapsto \omega(t)
\]
from \([0,\infty)\times D([0,\infty),\Omega)\) to \(\Omega\) is Borel
measurable, the set
\[
        E_C
        =
        \{(t,\omega):\omega(t)\in C^c\}
\]
is Borel. Hence
\[
        H_C^c
        =
        \{\omega:\exists t\ge0,\ \omega(t)\in C^c\}
\]
is the projection of a Borel set onto path space. Thus \(H_C^c\) is analytic,
and \(H_C\) is coanalytic. In particular, \(H_C\) is universally measurable.

Since \((\mathbb P_\xi)_{\xi\in\Omega}\) is a Borel probability kernel on the
canonical path space, the map
\[
        \xi\longmapsto \mathbb P_\xi(H_C)
\]
is universally measurable. Therefore
\[
        S_C
        =
        C\cap\{\xi:\mathbb P_\xi(H_C)=1\}
\]
is universally measurable.

It remains to prove the absorption property. Let \(\xi\in S_C\) and let
\(u\ge0\). Since \(\mathbb P_\xi(H_C)=1\), we have
\[
        P_u(\xi,C)=1.
\]
Moreover, by the Markov property at time \(u\),
\[
        \mathbb E_\xi
        \bigl[
            \mathbf 1_{H_C}\circ\theta_u \mid \mathcal F_u
        \bigr]
        =
        \mathbb P_{\sigma_u}(H_C).
\]
On \(H_C\), the shifted path also belongs to \(H_C\). Hence
\[
        \mathbf 1_{H_C}\circ\theta_u=1
        \qquad
        \mathbb P_\xi\text{-a.s.}
\]
It follows that
\[
        \mathbb P_{\sigma_u}(H_C)=1
        \qquad
        \mathbb P_\xi\text{-a.s.}
\]
Together with \(P_u(\xi,C)=1\), this gives
\[
        \sigma_u\in S_C
        \qquad
        \mathbb P_\xi\text{-a.s.}
\]
Equivalently,
\[
        P_u(\xi,S_C)=1.
\]
Thus \(S_C\) is semigroup-absorbing in the universally completed sense.
\end{proof}

\section{Periodic antisymmetric configurations}

We now construct the deterministic configurations which generate the
intermediate absorbing region. We use periodic configurations satisfying a translation-spin-flip symmetry on a finite-index quotient of $\mathbb Z^2$. This symmetry implies fixed-time centering under the zero-field dynamics.

\subsection{Centered sector and antisymmetric periodicity}

Recall the centered asymptotic sector
\[
        C_0:=\{\sigma\in\Omega : M_n(\sigma)\to0\}.
\]
Since \(m_\beta>0\), one has
\[
        C_0\subset R.
\]

Let \(\Theta\) denote the global spin-flip,
\[
        (\Theta\sigma)_i=-\sigma_i,
\]
and let \(\tau_u\) denote translation by \(u\in\mathbb Z^2\), with the
convention
\[
        (\tau_u\sigma)_i=\sigma_{i-u}.
\]

We define the class of periodic antisymmetric configurations by
\[
        \mathcal A_{\mathrm{anti}}
        :=
        \left\{
        \eta\in\Omega :
        \exists\, L\subset\mathbb Z^2 \text{ a finite-index subgroup},
        \ \exists\, u\in\mathbb Z^2
        \text{ such that }
        \tau_\ell\eta=\eta\ \forall \ell\in L,
        \quad
        \tau_u\eta=\Theta\eta
        \right\}.
\]

Thus \(\mathcal A_{\mathrm{anti}}\) consists of configurations which are
periodic modulo a finite-index subgroup and whose translation by \(u\)
is their global spin-flip.

\subsection{Fixed-time centering}

\begin{lemma}[Fixed-time centering]
\label{lem:fixed-time-centering}
For every \(\eta\in\mathcal A_{\mathrm{anti}}\) and every \(t\ge0\),
\[
        \delta_\eta P_t(C_0)=1.
\]
\end{lemma}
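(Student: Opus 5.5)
The plan is to realise the law \(\delta_\eta P_t\) through the Harris graphical representation. We then apply a spatial ergodic theorem along the periodicity lattice \(L\), and use the antisymmetry only at the end, to identify the almost sure limit of \(M_n\). Fix \(\eta\in\mathcal A_{\mathrm{anti}}\) with lattice \(L\) and translation \(u\), and fix \(t\ge0\). Let \(\mathcal N\) denote the Harris noise: independent Poisson clocks at each site, together with independent uniform marks. Its law \(\mathbb Q\) is a product measure, invariant and mixing under all lattice translations, and in particular under the action of \(L\). The Harris construction gives a measurable map \(\Phi_t\) with \(\sigma_t=\Phi_t(\eta,\mathcal N)\) \(\mathbb Q\)-a.s. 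It satisfies the covariance identities
\[
\Phi_t(\tau_v\xi,\tau_v\mathcal N)=\tau_v\Phi_t(\xi,\mathcal N),\qquad
\Phi_t(\Theta\xi,\bar{\mathcal N})=\Theta\Phi_t(\xi,\mathcal N),
\]
where \(\bar{\mathcal N}\) reflects each uniform mark \(U\mapsto 1-U\). This second transformation preserves \(\mathbb Q\), and the identity holds because at zero field the heat-bath rule is spin-flip symmetric.

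\emph{Step 1: ergodicity of \(\sigma_t\) under \(L\).} Since \(\tau_\ell\eta=\eta\) for \(\ell\in L\), the first identity gives \(\sigma_t(\tau_\ell\mathcal N)=\tau_\ell\sigma_t(\mathcal N)\). Hence \(\sigma_t\) is an \(L\)-equivariant factor of the mixing \(L\)-system \((\mathcal N,\mathbb Q)\), so its law is \(L\)-invariant and ergodic.

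\emph{Step 2: almost sure limit of \(M_n\).} Choose a fundamental domain \(F\) of \(L\) and set \(f(\sigma)=|F|^{-1}\sum_{i\in F}\sigma_i\). Tile \(\Lambda_n\) by \(L\)-translates of \(F\); the boundary layer has \(O(n)\) sites, which is negligible against \(|\Lambda_n|\asymp n^2\). Up to this error, \(M_n(\sigma_t)\) is an average of \(f\circ\tau_\ell\) over an increasing sequence of boxes in \(L\cong\mathbb Z^2\). Tempelman's pointwise ergodic theorem for \(\mathbb Z^2\)-actions along boxes then gives
\[
M_n(\sigma_t)\to c:=\mathbb E_{\mathbb Q}\bigl[f(\sigma_t)\bigr]\quad\text{a.s.}
\]
The limit is a constant by the ergodicity of Step 1.

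\emph{Step 3: antisymmetry forces \(c=0\).} Apply the two covariance identities to \(\tau_u\eta=\Theta\eta\). The law of \(\tau_u\sigma_t\) is the law of the process started from \(\tau_u\eta\). The same law is that of the process started from \(\Theta\eta\), which is the law of \(\Theta\sigma_t\). The almost sure limit of \(M_n\) is unchanged by the fixed translation \(\tau_u\), again up to a boundary term of order \(O(n)/n^2\). Under \(\Theta\), the limit changes sign. Therefore the almost sure constant \(c\) satisfies \(c=-c\), so \(c=0\). This gives \(\delta_\eta P_t(C_0)=\mathbb Q(M_n(\sigma_t)\to0)=1\).

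The main obstacle is making Steps 1 and 2 rigorous in infinite volume. First, the Harris map must be defined on a full-measure, translation-invariant set of noise realisations on which the equivariance identities hold pointwise. This follows from the standard finite-range percolation argument for small time slabs, iterated to reach time \(t\). Second, the ergodic theorem must be applied to the finite-index sublattice \(L\) rather than to all of \(\mathbb Z^2\). I would handle this by identifying \(L\) with \(\mathbb Z^2\) through a basis, and comparing the squares \(\Lambda_n\) with the resulting parallelogram boxes. Both are Følner sequences whose symmetric differences have \(O(n)\) sites, so Tempelman's theorem applies to the parallelograms and the limit transfers to \(\Lambda_n\).
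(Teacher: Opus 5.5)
Your proposal is correct and follows the paper's proof. The Harris covariance makes $\delta_\eta P_t$ invariant and ergodic under $L$, a pointwise ergodic theorem along $L$ gives an almost sure constant limit of $M_n$, and the identity $(\tau_u)_*(\delta_\eta P_t)=\Theta_*(\delta_\eta P_t)$ forces that constant to vanish.

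The only real difference is the last step. The paper works coset by coset, proving $c_a=-c_{a+u}$ on $\mathbb Z^2/L$ and summing over the cycles of $a\mapsto a+u$. You instead apply the antisymmetry directly to the limit of $M_n$, which is unchanged by $\tau_u$ and changes sign under $\Theta$. This is slightly shorter, and your explicit mark reflection $U\mapsto 1-U$ justifies the spin-flip commutation that the paper only asserts.
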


\begin{proof}
Fix \(\eta\in\mathcal A_{\mathrm{anti}}\). Choose a finite-index subgroup
\(L\subset\mathbb Z^2\) and \(u\in\mathbb Z^2\) such that
\[
        \tau_\ell\eta=\eta\quad(\ell\in L),
        \qquad
        \tau_u\eta=\Theta\eta .
\]
Fix \(t\ge0\), and write
\[
        \nu_t:=\delta_\eta P_t .
\]

We first record a standard consequence of the graphical construction. Let
\(\mathcal H_t\) denote the marked Harris noise in the time interval
\([0,t]\). The field \(\mathcal H_t\) is ergodic under the spatial action
of \(L\). Moreover the time-\(t\) configuration can be written as
\[
        \sigma_t=\Phi_t(\eta,\mathcal H_t),
\]
where \(\Phi_t\) is translation-covariant. Since \(\eta\) is \(L\)-periodic,
for every \(\ell\in L\),
\[
        \Phi_t(\eta,\tau_\ell\mathcal H_t)
        =
        \tau_\ell \Phi_t(\eta,\mathcal H_t).
\]
Thus \(\nu_t\) is the image of an \(L\)-ergodic system by an
\(L\)-equivariant factor. Hence \(\nu_t\) is \(L\)-invariant and
\(L\)-ergodic.

Let
\[
        Q:=\mathbb Z^2/L.
\]
For \(a\in Q\), choose a representative, still denoted \(a\), and set
\[
        \Gamma_a:=a+L,
        \qquad
        c_a:=\int_\Omega \sigma_a\,\nu_t(d\sigma).
\]
The number \(c_a\) is well-defined on \(Q\), since \(\nu_t\) is
\(L\)-invariant.

For each \(a\in Q\), put
\[
        F_n^a:=\{\ell\in L:\ a+\ell\in\Lambda_n\}.
\]
The sets \(F_n^a\) form a Følner sequence in \(L\): translating by any
fixed element of \(L\) changes \(F_n^a\) only through a boundary whose
relative size tends to zero. By the pointwise ergodic theorem for amenable group
actions \cite{Lindenstrauss01}, for \(\nu_t\)-almost every \(\sigma\),
\[
        \frac{1}{|\Lambda_n\cap \Gamma_a|}
        \sum_{i\in\Lambda_n\cap \Gamma_a}\sigma_i
        \longrightarrow c_a .
\]
Since \(Q\) is finite, this convergence holds simultaneously for all
\(a\in Q\) on a common \(\nu_t\)-full set.

We next use the antisymmetry. The zero-field dynamics commutes with
translations and with the global spin-flip. Hence
\[
        (\tau_u)_*\nu_t
        =
        \delta_{\tau_u\eta}P_t
        =
        \delta_{\Theta\eta}P_t
        =
        \Theta_*\nu_t .
\]
With our convention
\[
        (\tau_v\sigma)_i=\sigma_{i-v},
\]
one has, for any bounded measurable \(f\),
\[
        \int f(\sigma)\,d((\tau_u)_*\nu_t)(\sigma)
        =
        \int f(\tau_u\sigma)\,d\nu_t(\sigma).
\]
Applying this to \(f(\sigma)=\sigma_{a+u}\), we get
\[
        \int \sigma_{a+u}\,d((\tau_u)_*\nu_t)
        =
        \int (\tau_u\sigma)_{a+u}\,d\nu_t(\sigma)
        =
        \int \sigma_a\,d\nu_t(\sigma)
        =
        c_a .
\]
On the other hand,
\[
        \int \sigma_{a+u}\,d(\Theta_*\nu_t)
        =
        \int (\Theta\sigma)_{a+u}\,d\nu_t(\sigma)
        =
        -\int \sigma_{a+u}\,d\nu_t(\sigma)
        =
        -c_{a+u}.
\]
Since \((\tau_u)_*\nu_t=\Theta_*\nu_t\), it follows that
\[
        c_a=-c_{a+u},
        \qquad a\in Q.
\]

Let \(T:Q\to Q\) be the permutation \(T(a)=a+u\). Notice that no assumption
such as \(2u\in L\) is needed here. Let
\[
        a,\ T(a),\ldots,T^{k-1}(a)
\]
be a cycle of \(T\). Since \(c_{T(b)}=-c_b\), we have
\[
        c_{T^j(a)}=(-1)^j c_a,\qquad 0\le j\le k-1.
\]
If \(k\) is even, the terms cancel pairwise. If \(k\) is odd, then
\(T^k(a)=a\) gives
\[
        c_a=c_{T^k(a)}=(-1)^k c_a=-c_a,
\]
hence \(c_a=0\), and all entries on the cycle vanish. In both cases, the
sum of the \(c_b\)'s over the cycle is zero. Summing over the cycles of
\(T\), we obtain
\[
        \sum_{a\in Q}c_a=0.
\]

Finally, for \(\nu_t\)-almost every \(\sigma\), decompose the empirical
magnetization over the cosets of \(L\):
\[
        M_n(\sigma)
        =
        \sum_{a\in Q}
        \frac{|\Lambda_n\cap \Gamma_a|}{|\Lambda_n|}
        \left(
        \frac{1}{|\Lambda_n\cap \Gamma_a|}
        \sum_{i\in\Lambda_n\cap \Gamma_a}\sigma_i
        \right).
\]
Since \(L\) has finite index,
\[
        \frac{|\Lambda_n\cap \Gamma_a|}{|\Lambda_n|}
        \longrightarrow
        \frac1{|Q|}
        \qquad(a\in Q).
\]
Using the coset ergodic limits above, we obtain
\[
        M_n(\sigma)
        \longrightarrow
        \frac1{|Q|}
        \sum_{a\in Q}c_a
        =
        0.
\]
Thus \(\nu_t(C_0)=1\), i.e.
\[
        \delta_\eta P_t(C_0)=1 .
\]
\end{proof}

\subsection{Density of the generating configurations}

\begin{lemma}[Density of periodic antisymmetric configurations]
\label{lem:anti-dense}
The set \(\mathcal A_{\mathrm{anti}}\) is dense in \(\Omega\) for the
product topology.
\end{lemma}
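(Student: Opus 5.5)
Density in the product topology means that every cylinder set meets \(\mathcal A_{\mathrm{anti}}\). So it suffices to show: for every \(\xi\in\Omega\) and every \(n\ge1\) there is \(\eta\in\mathcal A_{\mathrm{anti}}\) with \(\eta_i=\xi_i\) for all \(i\in\Lambda_n\). The plan is an explicit construction by periodization. We copy the pattern of \(\xi\) on a box, place a flipped copy on an adjacent box, and tile the plane periodically with this doubled block.

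\textbf{Construction.} Fix \(n\) and let \(N:=2n+1\), the side length of \(\Lambda_n\). Take
\[
        L:=2N\mathbb Z\times N\mathbb Z,\qquad u:=(N,0).
\]
Then \(L\) has index \(2N^2\) and \(2u\in L\). Set \(B:=[-n,n+N]\times[-n,n]\cap\mathbb Z^2\), a fundamental domain for \(L\). Its two halves are \(B_0=\Lambda_n\) and \(B_1=\Lambda_n+u\). Define \(\eta\) on \(B\) by
\[
        \eta_i=\xi_i \quad (i\in B_0),
        \qquad
        \eta_{i}=-\xi_{i-u} \quad (i\in B_1).
\]
Extend \(\eta\) to all of \(\mathbb Z^2\) by \(L\)-periodicity. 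Since \(B\) is a fundamental domain, this extension is well defined, and \(\tau_\ell\eta=\eta\) for every \(\ell\in L\).

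\textbf{Verification of antisymmetry.} We must check that \((\tau_u\eta)_i=\eta_{i-u}=-\eta_i\) for every \(i\). By \(L\)-periodicity it is enough to check this for \(i\in B\).
\begin{itemize}
\item If \(i\in B_1\), then \(i-u\in B_0\), and \(\eta_i=-\xi_{i-u}=-\eta_{i-u}\).
\item If \(i\in B_0\), then \(i-u\equiv i+u \pmod L\) because \(2u\in L\), and \(i+u\in B_1\). Hence \(\eta_{i-u}=\eta_{i+u}=-\xi_i=-\eta_i\).
\end{itemize}
So \(\tau_u\eta=\Theta\eta\), and \(\eta\in\mathcal A_{\mathrm{anti}}\). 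Moreover \(\eta\) agrees with \(\xi\) on \(\Lambda_n=B_0\), which gives density.

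\textbf{Main obstacle.} The only step needing care is this index bookkeeping: the sign convention for \(\tau_u\), and the fact that \(u\) acts as an involution on \(\mathbb Z^2/L\) swapping the two halves of \(B\). Choosing \(2u\in L\) makes this transparent. No probabilistic input, and no earlier result of the paper, is needed.
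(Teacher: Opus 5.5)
Your proof is correct and uses the same periodization mechanism as the paper. Both copy the pattern, place its spin-flip at offset $u$, take a finite-index $L$ with $2u\in L$ so that $a\mapsto a+u$ is an involution on $\mathbb Z^2/L$, and extend $L$-periodically. The difference is your explicit choice $L=2N\mathbb Z\times N\mathbb Z$, $u=(N,0)$, which makes $\Lambda_n\cup(\Lambda_n+u)$ exactly a fundamental domain. That makes consistency automatic, so you avoid the paper's check that the translates $F+\ell$, $F+u+\ell$ are disjoint (done there via a suitably large $v$ in $L=\langle 2u,v\rangle$), and there are no leftover cosets to fill arbitrarily.
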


\begin{proof}
It is enough to show that every nonempty cylinder set intersects
\(\mathcal A_{\mathrm{anti}}\). Let \(U\subset\Omega\) be a nonempty
cylinder set. Then there exist a finite set \(F\Subset\mathbb Z^2\) and
a pattern \(\xi_F\in\{-1,+1\}^F\) such that
\[
        U=\{\sigma\in\Omega:\sigma|_F=\xi_F\}.
\]

Choose \(u\in\mathbb Z^2\) so large that
\[
        F\cap(F+u)=\varnothing.
\]
We next choose a finite-index subgroup \(L\subset\mathbb Z^2\) such that
\[
        2u\in L,
        \qquad
        u\notin L,
\]
and such that the sets
\[
        F+\ell,
        \qquad
        F+u+\ell,
        \qquad \ell\in L,
\]
are mutually disjoint, except for the trivial coincidences inside the
same translate.

This can be done as follows. Choose \(u\) so that both \(u\) and \(-u\)
lie outside the finite difference set \(F-F\). Then choose a vector
\(v\in\mathbb Z^2\), independent of \(u\), with sufficiently large norm,
and set
\[
        L=\langle 2u,v\rangle.
\]
After enlarging \(v\) if necessary, no nonzero element of \(L\), and no
element of \(u+L\), belongs to \(F-F\). This gives the desired
disjointness.

Let
\[
        Q:=\mathbb Z^2/L.
\]
Since \(2u\in L\), the map
\[
        T:Q\to Q,
        \qquad
        T(a)=a+u,
\]
is an involution. Since \(u\notin L\), it has no fixed point.

First impose the
prescribed values on the \(L\)-periodic copies of \(F\) and \(F+u\):
\[
        \eta_{i+\ell}=\xi_F(i),
        \qquad
        \eta_{i+u+\ell}=-\xi_F(i),
        \qquad i\in F,\ \ell\in L.
\]
The choice of \(L\) guarantees that these prescriptions are consistent.

On the remaining \(T\)-orbits in \(Q\), choose one representative
\(a\) from each pair \(\{a,a+u\}\), assign \(\eta_a\in\{-1,+1\}\)
arbitrarily, and impose
\[
        \eta_{a+u}=-\eta_a.
\]
Then extend the resulting assignment \(L\)-periodically to all of
\(\mathbb Z^2\).

By construction,
\[
        \tau_\ell\eta=\eta
        \qquad \forall \ell\in L.
\]
Moreover, since the values on every pair of cosets \(a\) and \(a+u\) are
opposite, and since
\[
        (\tau_u\eta)_i=\eta_{i-u},
\]
we have
\[
        \tau_u\eta=\Theta\eta.
\]
Thus \(\eta\in\mathcal A_{\mathrm{anti}}\). Finally,
\[
        \eta|_F=\xi_F,
\]
so \(\eta\in U\). Hence every nonempty cylinder set meets
\(\mathcal A_{\mathrm{anti}}\), and \(\mathcal A_{\mathrm{anti}}\) is
dense in \(\Omega\).
\end{proof}

\subsection{Dense marginal confinement in the centered level}

The fixed-time centering lemma has the following immediate consequence.
Let
\[
C_0 := \{\sigma\in\Omega : M_n(\sigma)\to0\}.
\]
Define
\[
D_0
:=
\left\{
\xi\in C_0 : P_t(\xi,C_0)=1\ \text{for every }t\ge0
\right\}.
\]

\begin{proposition}[Dense marginal confinement in \(C_0\)]\label{prop:dense-marginal-C0}
The set \(D_0\) contains \(A_{\mathrm{anti}}\). In particular, \(D_0\) is dense in \(\Omega\).
\end{proposition}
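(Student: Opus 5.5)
The inclusion \(\mathcal A_{\mathrm{anti}}\subset D_0\) should follow directly from Lemma~\ref{lem:fixed-time-centering}, and density from Lemma~\ref{lem:anti-dense}. Fix \(\eta\in\mathcal A_{\mathrm{anti}}\). By definition of \(D_0\), two things must be checked: first that \(\eta\in C_0\), and second that \(P_t(\eta,C_0)=1\) for every \(t\ge0\).

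For the second condition, note that \(P_t(\eta,C_0)=\delta_\eta P_t(C_0)\). Lemma~\ref{lem:fixed-time-centering} states exactly that this equals \(1\) for all \(t\ge0\), so nothing more is needed there.

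For the first condition, I would apply the case \(t=0\) of the same lemma. Since \(P_0=\mathrm{Id}\), we have \(\delta_\eta P_0=\delta_\eta\), and the lemma then gives \(\delta_\eta(C_0)=1\), that is, \(\eta\in C_0\). As an independent check, the same conclusion can be read off the proof of the lemma. The measure \(\delta_\eta\) is \(L\)-invariant, and it is also \(L\)-ergodic, being a point mass. Its coset averages are \(c_a=\eta_a\). The relation \(c_{a+u}=-c_a\) is just \(\tau_u\eta=\Theta\eta\) read coordinatewise. The cycle cancellation then gives \(\sum_{a\in Q}\eta_a=0\), and the coset decomposition yields \(M_n(\eta)\to0\). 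This deterministic version is elementary: the block sums over each coset of a finite-index sublattice are controlled up to boundary terms of relative size \(O(1/n)\). So no ergodic theorem is actually required at \(t=0\).

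Together these give \(\mathcal A_{\mathrm{anti}}\subset D_0\). Finally, any superset of a dense set is dense, so Lemma~\ref{lem:anti-dense} yields density of \(D_0\) in \(\Omega\).

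I do not expect a real obstacle here. The only point needing care is that membership \(\eta\in C_0\) is part of the definition of \(D_0\) and is not automatic from the statement about \(t>0\). This is why I take the \(t=0\) case explicitly, either through the lemma or through the direct counting argument above.
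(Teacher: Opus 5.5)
Your proposal is correct and matches the paper's proof. The fixed-time centering lemma gives \(P_t(\eta,C_0)=1\) for all \(t\ge0\), membership \(\eta\in C_0\) comes from the cancellation of opposite spins over each period cell (your ``independent check''), and density passes from \(\mathcal A_{\mathrm{anti}}\) to its superset \(D_0\); your main route to \(\eta\in C_0\) via the case \(t=0\) of the lemma, using \(\delta_\eta P_0=\delta_\eta\), is also valid and slightly shorter, since the lemma covers all \(t\ge0\) and its proof degenerates harmlessly at \(t=0\).
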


\begin{proof}
Let \(\eta\in A_{\mathrm{anti}}\). By the fixed-time centering lemma,
\[
P_t(\eta,C_0)=1
\qquad \forall t\ge0.
\]
Moreover, \(\eta\in C_0\): over each period cell, the translation-spin-flip
symmetry pairs opposite spins, so the period-cell average is zero, and hence
the block averages \(M_n(\eta)\) converge to \(0\). 
Hence \(\eta\in D_0\). Therefore
\[
A_{\mathrm{anti}}\subset D_0.
\]
Since \(A_{\mathrm{anti}}\) is dense in \(\Omega\), the set \(D_0\) is dense.
\end{proof}

\begin{remark}[From deterministic-time to pathwise confinement]
The set \(D_0\) records confinement in \(C_0\) at every deterministic
time. The next section shows that, for exact magnetization levels, this
already implies pathwise confinement. More precisely, the vanishing-mesh
argument proves \(D_r\subset S_{C_r}\) for every \(r\in[-1,1]\), and in
particular
\[
        D_0\subset S_{C_0}.
\]
\end{remark}

\section{Pathwise confinement in the centered and non-pure regions}

The next argument upgrades deterministic-time confinement to pathwise confinement by controlling updates between successive mesh times and then letting the mesh size vanish.

The difficulty is that confinement at each mesh time $k\delta$ leaves an error of order $p_\delta$ between mesh points. Taking $\delta \to 0$ kills this error simultaneously for all t.

\subsection{A vanishing-mesh upgrade for exact levels}

The following statement is only an upgrade statement. It does not assert that
\(D_r\) is nonempty. In this paper it will be applied only at \(r=0\), where
nonemptiness follows from the antisymmetric periodic configurations constructed
above.

For \(r\in[-1,1]\), set
\[
        C_r:=\{\sigma\in\Omega:\ M_n(\sigma)\to r\}.
\]
Define the deterministic-time confinement set
\[
        D_r
        :=
        \{\xi\in C_r:\ P_t(\xi,C_r)=1\ \text{for every }t\ge0\},
\]
and the pathwise confinement set
\[
        S_{C_r}
        :=
        \{\xi\in C_r:\ \mathbb P_\xi(\sigma_t\in C_r\ \forall t\ge0)=1\}.
\]
Thus \(D_r\) records confinement in \(C_r\) at every deterministic time,
whereas \(S_{C_r}\) records confinement in \(C_r\) along the whole path.

\begin{proposition}[Vanishing-mesh upgrade]
\label{prop:vanishing-mesh-upgrade}
For every \(r\in[-1,1]\),
\[
        D_r\subset S_{C_r}.
\]
\end{proposition}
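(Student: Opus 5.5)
The plan is to couple the dynamics started from \(\xi\in D_r\) with its Harris graphical representation. The key observation is that a spin can change on a time interval only if its Poisson clock rings there. Deterministic-time confinement then controls the magnetization at countably many mesh times. Clock rings bound the oscillation of \(M_n\) between mesh times uniformly in \(t\), because the bound is given by the fraction of sites in \(\Lambda_n\) whose clock rings in a whole mesh interval, which does not depend on the particular \(t\) in that interval. Finally we let the mesh vanish along a countable sequence.

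\textbf{Step 1 (mesh times).} Fix \(\xi\in D_r\) and a mesh \(\delta=1/m\). Since \(P_t(\xi,C_r)=1\) for every \(t\ge0\), countable additivity gives an event of full \(\mathbb P_\xi\)-probability on which \(\sigma_{k/m}\in C_r\) for all \(k,m\in\mathbb N\).

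\textbf{Step 2 (oscillation between mesh times).} For \(k\ge0\) let \(N_i^{k,\delta}\) be the indicator that the Harris clock at site \(i\) rings at least once in \([k\delta,(k+1)\delta]\). These are i.i.d.\ Bernoulli variables with parameter \(p_\delta=1-e^{-\delta}\), taking unit clock rates; another constant rate only changes \(p_\delta\) by a constant factor. For every \(t\in[k\delta,(k+1)\delta]\), the sites with \((\sigma_t)_i\neq(\sigma_{k\delta})_i\) are among those with \(N_i^{k,\delta}=1\). Hence, deterministically on the Harris construction,
\[
\sup_{t\in[k\delta,(k+1)\delta]}\bigl|M_n(\sigma_t)-M_n(\sigma_{k\delta})\bigr|
\le \frac{2}{|\Lambda_n|}\sum_{i\in\Lambda_n}N_i^{k,\delta}.
\]
By the strong law of large numbers, or the ergodic theorem for the i.i.d.\ field, the right-hand side converges almost surely to \(2p_\delta\). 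We take a common full-measure event over the countably many pairs \((k,m)\).

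\textbf{Step 3 (vanishing mesh).} On the intersection of the events of Steps 1 and 2, fix any \(t\ge0\) and any \(m\), and let \(k=\lfloor tm\rfloor\). Then
\[
\limsup_n |M_n(\sigma_t)-r|\le \limsup_n|M_n(\sigma_{k/m})-r|+2p_{1/m}=2p_{1/m}.
\]
Letting \(m\to\infty\) gives \(M_n(\sigma_t)\to r\), so \(\sigma_t\in C_r\). This holds simultaneously for all \(t\), because the exceptional set does not depend on \(t\). Therefore \(\mathbb P_\xi(\sigma_t\in C_r\ \forall t\ge0)=1\), and since \(\xi\in C_r\) by definition of \(D_r\), we get \(\xi\in S_{C_r}\).

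The main obstacle is Step 2: uniform control over the uncountably many times in each mesh interval. The device is to dominate the discrepancy by a \(t\)-independent quantity, the density of sites with a ring in the interval. This is legitimate only because \(\mathbb P_\xi\) is realized through the Harris construction, so the path-space event \(H_{C_r}\) can be evaluated on the graphical representation. One should check that the canonical law and the graphical construction agree on this universally measurable event; Proposition~\ref{prop:maximal-pathwise-confinement} handles the measurability. The remaining steps are countable unions of null sets.
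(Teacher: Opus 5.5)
Your proposal is correct and follows essentially the paper's own argument: confinement at countably many mesh times, domination of the magnetization increment on each mesh interval by the ($t$-independent) empirical density of clock rings, the strong law of large numbers on a countable intersection of full-measure events, and then $\delta\to0$. Your closing remark about transferring the conclusion from the Harris space to the canonical law $\mathbb P_\xi$ of the universally measurable event $H_{C_r}$ makes explicit a point the paper handles only implicitly, by working on the graphical probability space throughout.
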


\begin{proof}
Fix \(\xi\in D_r\). Let \(\delta>0\). For \(k\ge0\) and \(i\in\mathbb Z^2\),
let
\[
        J_i^{(k,\delta)}
        :=
        \mathbf 1\{\text{the clock at site }i
        \text{ rings during }[k\delta,(k+1)\delta]\}.
\]
Then \((J_i^{(k,\delta)})_{i\in\mathbb Z^2}\) is an i.i.d. Bernoulli field
with parameter
\[
        p_\delta:=1-e^{-\delta}.
\]
By the strong law of large numbers, for every fixed \(k\),
\[
        \frac1{|\Lambda_n|}
        \sum_{i\in\Lambda_n}J_i^{(k,\delta)}
        \longrightarrow p_\delta
\]
almost surely. Since there are only countably many \(k\)'s, this convergence
holds simultaneously for all \(k\ge0\) on an event of probability one.

Since \(\xi\in D_r\), for every \(k\ge0\),
\[
        \mathbb P_\xi(\sigma_{k\delta}\in C_r)=1.
\]
Intersecting again over the countable set of mesh times, we may also assume
that
\[
        \sigma_{k\delta}\in C_r
        \qquad\forall k\ge0.
\]

Fix a graphical realization in this probability-one event, and let \(t\ge0\).
Choose \(k\ge0\) such that
\[
        t\in[k\delta,(k+1)\delta].
\]
If the spin at site \(i\) changes between \(k\delta\) and \(t\), then the
clock at \(i\) must have rung during \([k\delta,(k+1)\delta]\). Hence
\[
        |\sigma_t(i)-\sigma_{k\delta}(i)|
        \le
        2J_i^{(k,\delta)}.
\]
Averaging over \(\Lambda_n\), we obtain
\[
        |M_n(\sigma_t)-M_n(\sigma_{k\delta})|
        \le
        2\frac1{|\Lambda_n|}
        \sum_{i\in\Lambda_n}J_i^{(k,\delta)}.
\]
Since \(\sigma_{k\delta}\in C_r\), we have
\[
        M_n(\sigma_{k\delta})\longrightarrow r.
\]
Therefore
\[
        \limsup_{n\to\infty}
        |M_n(\sigma_t)-r|
        \le
        2p_\delta
        =
        2(1-e^{-\delta}).
\]

All these events are understood on the same Harris graphical probability
space, which carries the clocks for all sites and all time intervals
simultaneously. Fix a deterministic sequence \(\delta_m\downarrow0\). For
each \(m\), applying the preceding argument with \(\delta=\delta_m\) gives a
probability-one event on this same graphical space. Intersecting these events
over \(m\ge1\), we obtain a single probability-one event on which, for every
\(m\ge1\), every \(k\ge0\), and every
\(t\in[k\delta_m,(k+1)\delta_m]\),
\[
        \limsup_{n\to\infty}|M_n(\sigma_t)-r|
        \le 2(1-e^{-\delta_m}).
\]
Indeed, for fixed \(\delta_m\), the only spins which can differ between
\(k\delta_m\) and \(t\) are those whose clocks have rung in
\([k\delta_m,(k+1)\delta_m]\), whose empirical density converges to
\(1-e^{-\delta_m}\). Thus a fixed mesh gives a pathwise bound, uniform in
\(t\ge0\), with error depending only on the mesh size.
Since \(\delta_m\downarrow0\), the error \(2(1-e^{-\delta_m})\) tends to
zero. Therefore, for every \(t\ge0\),
\[
        \limsup_{n\to\infty}|M_n(\sigma_t)-r|=0,
\]
hence \(M_n(\sigma_t)\to r\).

This proves
\[
P_\xi(\sigma_t\in C_r\ \forall t\ge0)=1,
\]
so that \(\xi\in S_{C_r}\).

\end{proof}

\subsection{The centered pathwise core}

Applying this result to \(r=0\), and using the dense marginal confinement
obtained from periodic antisymmetric configurations, gives a dense pathwise
confined core inside the centered level.

\begin{corollary}[Dense pathwise confinement in \(C_0\)]
\label{cor:centered-pathwise-confinement}

One has
\[
        A_{\mathrm{anti}}\subset S_{C_0}.
\]
In particular, \(S_{C_0}\) is dense in \(\Omega\).
\end{corollary}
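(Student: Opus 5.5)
The corollary follows by chaining two results already established. We pass through the deterministic-time confinement set \(D_0\):
\[
        A_{\mathrm{anti}}\subset D_0\subset S_{C_0}.
\]

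For the first inclusion, take \(\eta\in A_{\mathrm{anti}}\) and apply Proposition~\ref{prop:dense-marginal-C0}. Its proof supplies the two ingredients we need. First, the fixed-time centering Lemma~\ref{lem:fixed-time-centering} gives \(P_t(\eta,C_0)=1\) for every \(t\ge0\). Second, the initial configuration lies in \(C_0\) itself.

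The second ingredient is the one point worth rechecking, since the proposition justifies it only briefly. I would argue as follows. Choose \(L\) and \(u\) as in the definition of \(A_{\mathrm{anti}}\). Define \(c_a=\eta_a\) on \(Q=\mathbb Z^2/L\); this is well defined by \(L\)-periodicity. Antisymmetry \(\tau_u\eta=\Theta\eta\) gives \(c_{a+u}=-c_a\). The cycle argument from the proof of Lemma~\ref{lem:fixed-time-centering} then yields \(\sum_{a\in Q}c_a=0\). Finally, decomposing \(M_n(\eta)\) over cosets and using \(|\Lambda_n\cap\Gamma_a|/|\Lambda_n|\to 1/|Q|\) gives \(M_n(\eta)\to0\).

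An alternative route to the same fact is to apply Lemma~\ref{lem:fixed-time-centering} at \(t=0\), where \(\delta_\eta P_0=\delta_\eta\) is a point mass, so \(\delta_\eta(C_0)=1\) means exactly \(\eta\in C_0\).

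For the second inclusion, apply the vanishing-mesh upgrade, Proposition~\ref{prop:vanishing-mesh-upgrade}, with \(r=0\). This gives \(D_0\subset S_{C_0}\). Here \(S_{C_0}\) coincides with the maximal pathwise confinement set of Section~2, because \(C_0\) is Borel: it is defined by convergence of countably many cylinder functions.

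Density then follows at once. By Lemma~\ref{lem:anti-dense}, \(A_{\mathrm{anti}}\) is dense in \(\Omega\), and any superset of a dense set is dense, so \(S_{C_0}\) is dense.

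There is no real obstacle here; the corollary is a bookkeeping step. The only care needed is to confirm that the hypotheses of Proposition~\ref{prop:vanishing-mesh-upgrade} are satisfied exactly by the output of Proposition~\ref{prop:dense-marginal-C0}. Both are stated for the same set \(D_0\) with the same Harris construction, so no further argument is required.
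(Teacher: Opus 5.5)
Your proposal is correct and follows the paper's proof exactly: you chain \(A_{\mathrm{anti}}\subset D_0\) from Proposition~\ref{prop:dense-marginal-C0} with \(D_0\subset S_{C_0}\) from Proposition~\ref{prop:vanishing-mesh-upgrade} at \(r=0\), and then get density of \(S_{C_0}\) from Lemma~\ref{lem:anti-dense}. Your extra verification that \(\eta\in C_0\) tightens the brief justification in the proof of Proposition~\ref{prop:dense-marginal-C0} without changing the route; it works either via the coset cycle argument with \(c_a=\eta_a\), or via Lemma~\ref{lem:fixed-time-centering} at \(t=0\), where \(\delta_\eta P_0=\delta_\eta\).
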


\begin{proof}
By Proposition~\ref{prop:dense-marginal-C0},
\[
        A_{\mathrm{anti}}\subset D_0.
\]
By Proposition~\ref{prop:vanishing-mesh-upgrade},
\[
        D_0\subset S_{C_0}.
\]
Therefore
\[
        A_{\mathrm{anti}}\subset S_{C_0}.
\]
Since \(A_{\mathrm{anti}}\) is dense in \(\Omega\), the set \(S_{C_0}\) is
dense in \(\Omega\) as well.
\end{proof}

\subsection{The non-pure pathwise core}

Since \(m_\beta>0\), one has \(C_0\subset R\). Therefore every trajectory
which is pathwise confined to \(C_0\) is also pathwise confined to \(R\), and
hence
\[
        S_{C_0}\subset S_R.
\]
Combining this with the centered pathwise confinement obtained above gives
the main pathwise inclusion.

\begin{theorem}[Centered and non-pure pathwise confinement]
\label{thm:centered-nonpure-confinement}
At low temperature,
\[
        A_{\mathrm{anti}}
        \subset
        S_{C_0}
        \subset
        S_R.
\]
In particular, \(S_{C_0}\) and \(S_R\) are dense and
semigroup-absorbing in the universally completed sense. 
\end{theorem}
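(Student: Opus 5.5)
The theorem is mostly an assembly of results already proved, so the plan is to chain them. The first inclusion \(A_{\mathrm{anti}}\subset S_{C_0}\) is exactly Corollary~\ref{cor:centered-pathwise-confinement}. That corollary was obtained by combining three ingredients: Lemma~\ref{lem:fixed-time-centering} (fixed-time centering), the observation in Proposition~\ref{prop:dense-marginal-C0} that each \(\eta\in A_{\mathrm{anti}}\) already lies in \(C_0\), and the vanishing-mesh upgrade \(D_0\subset S_{C_0}\) of Proposition~\ref{prop:vanishing-mesh-upgrade}. I will simply invoke it.

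For the second inclusion \(S_{C_0}\subset S_R\), the only input is \(C_0\subset R\), which holds because \(m_\beta>0\) at low temperature: a sequence \(M_n(\sigma)\) converging to \(0\) cannot converge to \(\pm m_\beta\). Then \(H_{C_0}\subset H_R\) as path events. Hence for \(\xi\in S_{C_0}\) we have \(\xi\in C_0\subset R\) and \(\mathbb P_\xi(H_R)\ge\mathbb P_\xi(H_{C_0})=1\), so \(\xi\in S_R\).

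For the "in particular" part, density follows from Lemma~\ref{lem:anti-dense}. Since \(A_{\mathrm{anti}}\) is dense and is contained in both \(S_{C_0}\) and \(S_R\), both sets are dense; they are nonempty because \(A_{\mathrm{anti}}\neq\varnothing\), again by Lemma~\ref{lem:anti-dense}.

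Semigroup absorption and universal measurability come from applying Proposition~\ref{prop:maximal-pathwise-confinement} with \(C=C_0\) and with \(C=R\). This requires checking that both sets are Borel:
\begin{itemize}
\item \(C_0\) is Borel because \(M_n\) is continuous (it is a finite sum of coordinates), and convergence to \(0\) is a countable \(\bigcap\bigcup\bigcap\) of the closed-open conditions \(\{|M_n|<1/k\}\).
\item \(P^\pm\) are Borel by the same argument, so \(R\) is Borel as a complement.
\end{itemize}

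I do not expect a real obstacle here. The only point needing care is that the absorption in Proposition~\ref{prop:maximal-pathwise-confinement} is stated in the universally completed sense. So \(P_u(\xi,S_C)\) should be read through the completion of \(P_u(\xi,\cdot)\), and I will state the conclusion in exactly that sense rather than claiming Borel absorption.
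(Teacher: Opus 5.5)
Your proposal is correct and follows the paper's proof essentially step for step: the first inclusion is Corollary~\ref{cor:centered-pathwise-confinement}, the second comes from \(C_0\subset R\), density is inherited from \(\mathcal A_{\mathrm{anti}}\), and absorption comes from Proposition~\ref{prop:maximal-pathwise-confinement}. Your version is slightly more complete than the paper's: the paper applies that proposition only with \(C=R\), even though the statement also claims absorption for \(S_{C_0}\), whereas you also apply it with \(C=C_0\) after explicitly checking that \(C_0\) and \(R\) are Borel.
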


\begin{proof}
The inclusion
\[
        A_{\mathrm{anti}}\subset S_{C_0}
\]
is Corollary~\ref{cor:centered-pathwise-confinement}. Since \(C_0\subset R\),
every trajectory confined to \(C_0\) is confined to \(R\), and therefore
\[
        S_{C_0}\subset S_R.
\]
Thus
\[
        A_{\mathrm{anti}}\subset S_{C_0}\subset S_R.
\]
Since \(A_{\mathrm{anti}}\) is dense in \(\Omega\), it follows that
\(S_{C_0}\) and \(S_R\) are dense.

Finally, \(R\) is Borel, so Proposition~\ref{prop:maximal-pathwise-confinement}
applied with \(C=R\) shows that \(S_R\) is universally measurable and
semigroup-absorbing in the universally completed sense.
\end{proof}

\section{No third Gibbs phase}

The pathwise confinement constructed above does not produce an additional
equilibrium phase.

Let \(\eta\in\mathcal A_{\mathrm{anti}}\), and set
\[
        \mu_T=\frac1T\int_0^T\delta_\eta P_t\,dt.
\]
Since \(\Omega\) is compact, the family \((\mu_T)_{T>0}\) is relatively
compact for the weak topology. By the usual Krylov--Bogoliubov argument,
every subsequential limit is invariant for the Glauber semigroup. In
dimension two, the Holley--Stroock theorem \cite{HolleyStroock77} and the
Aizenman--Higuchi classification \cite{Aizenman80,Higuchi81} imply that any
such limit is of the form
\[
        \lambda\mu^+ +(1-\lambda)\mu^-,
        \qquad \lambda\in[0,1].
\]

It remains to identify \(\lambda\). By definition of
\(\mathcal A_{\mathrm{anti}}\), there exist a finite-index subgroup
\(L\subset\mathbb Z^2\) and \(u\in\mathbb Z^2\) such that
\[
        \tau_\ell\eta=\eta \quad(\ell\in L),
        \qquad
        \tau_u\eta=\Theta\eta,
\]
where \(\Theta\) is the global spin-flip. Since the zero-field dynamics
commutes with translations and with \(\Theta\), one has, for every \(t\ge0\),
\[
        (\tau_u)_*(\delta_\eta P_t)=\Theta_*(\delta_\eta P_t).
\]
The same identity holds for every Cesaro limit \(\mu\). If
\(\mu=\lambda\mu^+ +(1-\lambda)\mu^-\), then translation invariance of
\(\mu^\pm\), together with
\[
        \Theta_*\mu^+=\mu^-,
        \qquad
        \Theta_*\mu^-=\mu^+,
\]
gives
\[
        \lambda\mu^+ +(1-\lambda)\mu^-
        =
        \lambda\mu^- +(1-\lambda)\mu^+.
\]
Hence \(\lambda=1/2\). Therefore every subsequential limit of
\((\mu_T)\) is \(\frac12(\mu^++\mu^-)\), and so
\[
        \mu_T \Rightarrow \frac12(\mu^++\mu^-).
\]

Thus the confined trajectories remain outside the two pure sectors but their Cesaro statistics are the symmetric Gibbs mixture.

\section{Conclusion}

Invariant measures and pathwise confinement sets record different aspects
of the dynamics. The former describe time-averaged behavior; the latter concern
the possible fate of individual trajectories. In the low-temperature
two-dimensional Ising model, this difference is already visible, and explicitly so.

The example is the class $\mathcal{A}_{\mathrm{anti}}$ of periodic
antisymmetric configurations. Their fixed-time centering is forced by the antisymmetry, which is preserved in law by the zero-field dynamics. The vanishing-mesh argument turns the fixed-time centering into
pathwise confinement and gives
\[
\mathcal{A}_{\mathrm{anti}}
\subset S_{C_0} \subset S_R.
\]
The Cesaro limits are still $\frac12(\mu^++\mu^-)$, so these trajectories
do not define a third equilibrium phase. The Gibbs classification remains
unchanged, although the pathwise confinement structure is not exhausted by
the two pure sectors.

A natural remaining question is whether one can construct explicit
pathwise confined, or semigroup-absorbing, subsets inside the exact pure
sectors \(P^\pm\) themselves.

\section*{Acknowledgments}

The author thanks Jeffrey Steif for helpful comments and for raising the
question of how absorbing structures should be understood in the stochastic
Ising model.

\bibliography{Ising}

@article{Harris72,
  author  = {Theodore E. Harris},
  title   = {Nearest-neighbor Markov interaction processes on multidimensional lattices},
  journal = {Advances in Mathematics},
  volume  = {9},
  number  = {1},
  pages   = {66--89},
  year    = {1972}
}

@book{Liggett85,
  author    = {Liggett, Thomas M.},
  title     = {Interacting Particle Systems},
  series    = {Grundlehren der Mathematischen Wissenschaften},
  volume    = {276},
  publisher = {Springer-Verlag},
  address   = {New York},
  year      = {1985}
}

@book{Georgii11,
  author    = {Hans-Otto Georgii},
  title     = {Gibbs Measures and Phase Transitions},
  edition   = {2},
  publisher = {De Gruyter},
  address   = {Berlin/Boston},
  year      = {2011},
  series    = {De Gruyter Studies in Mathematics},
  volume    = {9}
}

@book{FriedliVelenik17,
  author    = {Sacha Friedli and Yvan Velenik},
  title     = {Statistical Mechanics of Lattice Systems: A Concrete Mathematical Introduction},
  publisher = {Cambridge University Press},
  address   = {Cambridge},
  year      = {2017}
}

@article{Aizenman80,
  author  = {Michael Aizenman},
  title   = {Translation Invariance and Instability of Phase Coexistence in the Two-Dimensional {I}sing System},
  journal = {Communications in Mathematical Physics},
  volume  = {73},
  number  = {1},
  pages   = {83--94},
  year    = {1980}
}

@incollection{Higuchi81,
  author    = {Yasunari Higuchi},
  title     = {On the Absence of Non-Translation Invariant {G}ibbs States for the Two-Dimensional {I}sing System},
  booktitle = {Random Fields, Vol. II},
  series    = {Colloquia Mathematica Societatis J{\'a}nos Bolyai},
  volume    = {27},
  publisher = {North-Holland},
  address   = {Amsterdam},
  pages     = {517--534},
  year      = {1981}
}

@inproceedings{BhaskaraRaoPol78,
  title={Topological zero-one laws},
  author={Bhaskara Rao, KPS and Pol, Roman},
  booktitle={Colloquium Mathematicum},
  volume={39},
  number={1},
  pages={13--23},
  year={1978},
  organization={Institute of Mathematics Polish Academy of Sciences}
}

@article{HolleyStroock77,
  author  = {Holley, Richard A. and Stroock, Daniel W.},
  title   = {In one and two dimensions, every stationary measure for a stochastic Ising model is a Gibbs state},
  journal = {Communications in Mathematical Physics},
  volume  = {55},
  number  = {1},
  pages   = {37--45},
  year    = {1977}
}

@article{Lindenstrauss01,
  author  = {Lindenstrauss, Elon},
  title   = {Pointwise theorems for amenable groups},
  journal = {Inventiones Mathematicae},
  volume  = {146},
  number  = {2},
  pages   = {259--295},
  year    = {2001}
}

@book{Rosenblatt71,
  author    = {Rosenblatt, Murray},
  title     = {Markov Processes: Structure and Asymptotic Behavior},
  series    = {Grundlehren der Mathematischen Wissenschaften},
  volume    = {184},
  publisher = {Springer-Verlag},
  address   = {Berlin},
  year      = {1971}
}

\end{document}